\newlength{\hdelta}
\newlength{\vdelta}
\theoremstyle{plain}
\newtheorem{theorem}{Theorem}[section]
\newtheorem*{theorem*}{Theorem}
\newtheorem*{lemma*}{Lemma}
\newtheorem*{proposition*}{Proposition}
\newtheorem*{conjecture*}{Conjecture}
\theoremstyle{definition}
\newtheorem*{question*}{Question}
\newtheorem*{example*}{Example}
\newtheorem*{remark*}{Remark}
\numberwithin{equation}{section}
\newcommand{\mailto}[1]{\href{mailto:#1}{\nolinkurl{#1}}}
\newcommand{\E}{\operatorname{E}}
\newcommand{\pr}{\operatorname{P}}
\newcommand{\eqst}{=_{\rm{st}}}
\newcommand{\lest}{\le_{\rm{st}}}
\newcommand{\Z}{\mathbb{Z}}
\newcommand{\R}{\mathbb{R}}
\begin{document}

\title{Flow coupling and stochastic ordering of throughputs in linear networks\thanks{This is an author-prepared preprint version of a conference article with the same title,
to appear in \emph{Proc.\ 8th International Conference on Performance Evaluation Methodologies and Tools (Valuetools 2014)}.}}
\author{
 Lasse Leskelä\thanks{
 Department of Mathematics and Systems Analysis,
 Aalto University School of Science,
 PO Box 11100,
 00076 Aalto, Finland.
 \quad URL: \url{http://math.aalto.fi/\~lleskela/} \quad
 Email: \protect\mailto{lasse.leskela@aalto.fi}}
}
\date{\today}
\maketitle

\maketitle
\begin{abstract}
Robust estimates for the performance of complicated queueing networks can be obtained by showing that
the number of jobs in the network is stochastically comparable to a simpler, analytically
tractable reference network. Classical coupling results on  stochastic ordering of network populations require strong monotonicity assumptions which are often violated in practice.
However, in most real-world applications we care more about what goes through a network than what
sits inside it. This paper describes a new approach for ordering \emph{flows
instead of populations} by augmenting network states with their associated flow counting
processes and deriving Markov couplings of the augmented
state--flow processes.
\end{abstract}

\newcommand{\CCrit}{R_{\rm cr}}
\newcommand{\DCritO}{D_{\rm cr}}
\newcommand{\DCritR}{D'_{\rm cr}}

\newcommand{\Xalt}{X^{\rm alt}}
\newcommand{\Salt}{S^{\rm alt}}
\newcommand{\Falt}{F^{\rm alt}}
\newcommand{\Xorig}{X^{\rm orig}}
\newcommand{\Sorig}{S^{\rm orig}}
\newcommand{\Forig}{F^{\rm orig}}

\newcommand{\nin}{n_{\rm in}}
\newcommand{\nout}{n_{\rm out}}
\newcommand{\net}{N}
\newcommand{\lambdain}{\lambda_{\rm in}}
\newcommand{\lambdaout}{\lambda_{\rm out}}

\newcommand{\sima}{\stackrel{a,a'}{\sim}}
\newcommand{\leGen}{\le^{\mathcal{A,B}}}
\newcommand{\leGena}{\le^{\mathcal{A,B}}_{a,a'}}
\newcommand{\leCC}{\le^{\rm cc}_{(a,a')}}
\newcommand{\leCCE}{\le^{\rm cc}_{(\tilde a, \tilde a')}}
\newcommand{\leLC}{\le^{\rm lc}_{(a,a')}}
\newcommand{\aggflow}[2]{f(#1,#2)}

\newcommand{\alphao}{\alpha^{\rm orig}}
\newcommand{\alphab}{\alpha^{\rm bal}}
\newcommand{\Fo}{F^{\rm orig}}
\newcommand{\Fb}{F^{\rm bal}}

\definecolor{Gray}{rgb}{0.3,0.3,0.3}
\definecolor{LightBlue}{HTML}{66CCFF}
\definecolor{DarkBlue}{HTML}{000066}
\definecolor{DarkBlueOld}{rgb}{0.1,0.1,0.5}
\definecolor{Purple}{rgb}{0.4,0.0,0.3}
\definecolor{NiceRed}{HTML}{CC0000}
\definecolor{PinkishRed}{rgb}{0.9,0.0,0.3}
\definecolor{verylightgray}{HTML}{F0F0F0}
\definecolor{DarkGray}{HTML}{080808}
\definecolor{JY-blue}{HTML}{0038A8}
\definecolor{JY-red}{HTML}{C85300}
\definecolor{Yellowish}{HTML}{FFFFCC}

\section{Introduction}

\subsection{Stochastic ordering of network flows}

Robust estimates for the performance of a complicated queueing network 
can be obtained by showing that the process $X$ describing
the number of jobs in the network is stochastically comparable to a simpler, analytically
tractable reference network $X'$. Classical coupling results on
stochastic ordering of $X$ and $X'$ require strong monotonicity assumptions
\cite{Economou_2003a,Leskela_2010,Massey_1987,Muller_Stoyan_2002,Whitt_1986} which are often violated in practice.
However, in most real-world applications we care more about what goes through a network than what
sits inside it. 
This paper describes a new alternative approach for ordering \emph{flows
instead of populations} by augmenting the network states ${X}$ and ${X'}$ with their associated flow-counting
processes ${F}$ and ${F'}$ and deriving Markov couplings of the augmented
state--flow processes $(X,F)$ and $(X',F')$ in an extended state space.

Earlier methods applicable for ordering of flows are mostly based on Markov reward comparison techniques (e.g.\ \cite{Boucherie_VanDijk_2009,Busic_Vliegen_Scheller-Wolf_2012,VanDijk_1998,VanDijk_VanDerWal_1989,VanHoutum_Zijm_Adan_Wessels_1998}). While more limited in scope than the general Markov reward approach, the flow coupling technique presented here, when applicable, yields stronger ordering results using simpler analysis. This paper will demonstrate this in the context of open linear queueing networks with general state-dependent arrival and service rates.

\subsection{Motivating example}
\label{sec:TandemQueue}
Consider a network of two queues in series where queue~1 and queue~2 have buffer capacities $s_1$
and $s_2$, respectively. Jobs arriving while queue~1 is full are rejected and lost, and the
server of queue~1 halts when queue~2 is full. When all interarrival times and job sizes are independent and exponentially distributed, the network population can be represented as a Markov jump process $\Xorig$ in the state space $\Sorig = \{x \in \Z_+^2: \, x_1 \le s_1, \, x_2 \le s_2\}$ with transitions
\begin{equation}
 \label{eq:Original}
 x \mapsto \left\{
 \begin{aligned}
  x + e_1            & \quad \text{at rate} \ \beta 1(x_1 < s_1), \\
  x - e_1 + e_2 & \quad \text{at rate} \  \delta_1(x_1) 1(x_2 < s_2), \\
  x - e_2            & \quad \text{at rate} \ \delta_2(x_2),
 \end{aligned}
 \right.
\end{equation}
where $\beta$ is the arrival rate of offered jobs, $\delta_i(x_i)$ is the service rate at queue~$i$ when
queue~$i$ has size $x_i$, $e_i$ is the $i$-th unit vector in $\Z^2$, and $1(A)$ is the indicator function which returns one if statement $A$ is true and zero otherwise. For example, when $\delta_i(x_i) = c_i x_i$, the system corresponds to a multi-server queue where all servers operate at rate $c_i$. When the system is
irreducible, the long-run mean loss rate and other equilibrium
statistical characteristics can in in principle be evaluated by solving a linear equation for the
equilibrium distribution. However, because the system is not reversible, solving the linear
problem analytically or numerically is hard for large $s_1,s_2$.

To obtain a computationally tractable upper bound for the equilibrium loss rate, van~Dijk and van~der~Wal
\cite{VanDijk_VanDerWal_1989} introduced a modification of the network dynamics so that arrivals
are blocked also in states where queue~2 is full, and the second server halts when queue~1 is
full. This so-called balanced system has a product-form equilibrium distribution and can be represented as a Markov jump process $\Xalt$ with
transitions
\begin{equation}
 \label{eq:UpperModel}
 x \mapsto \left\{
 \begin{aligned}
  x + e_1            & \quad \text{at rate} \ \beta 1(x_1 < s_1, x_2 < s_2), \\
  x - e_1 + e_2  & \quad \text{at rate} \  \delta_1(x_1) 1(x_2 < s_2), \\
  x - e_2            & \quad \text{at rate} \ \delta_2(x_2) 1(x_1<s_1).
 \end{aligned}
 \right.
\end{equation}
Because the state $(s_1,s_2)$ is transient for $\Xalt$, it is natural to define $\Xalt$ on the space
$\Salt = \Sorig \setminus \{(s_1,s_2)\}$.

The balanced service system $\Xalt$ employs a stricter admission policy and provides less service for
queue~2. Hence it is intuitively feasible to assume that the counting processes $\Forig_{\rm in}(t)$
and $\Falt_{\rm in}(t)$ describing the number of accepted jobs up to time $t$ are ordered
according to
\begin{equation}
 \label{eq:LossBounds}
 \Falt_{\rm in}(t) \le \Forig_{\rm in}(t)
\end{equation}
with respect to a suitable stochastic order. However, we are faced with the
following conceptual problem: If $\Xalt$ accepts less jobs, it should have shorter queues,
which should imply that $\Xalt$ spends less time in blocking states, and hence $\Xalt$ should accept
\emph{more jobs}. For this reason, a simple sample path argument cannot be used to
prove~\eqref{eq:LossBounds}.

Under the natural assumption that $\delta_1(x_1)$ and $\delta_2(x_2)$ are increasing\footnote{In this paper the terms `positive',
`increasing', and `less than' are synonyms for `nonnegative', `nondecreasing', and `less or equal
than', respectively.} functions, Van~Dijk and van~der~Wal \cite{VanDijk_VanDerWal_1989} proved that
\begin{equation}
 \label{eq:LossBoundsMean}
 \E \Falt_{\rm in}(t) \le \E \Forig_{\rm in}(t)
\end{equation}
by uniformizing the Markov processes into a discrete-time chain and applying inductive Markov reward comparison techniques. They also argued that a simpler coupling proof is not possible due to nonmonotone effects caused by
the blocking phenomena.

In this paper it will be shown that although neither of the above Markov processes is monotone with respect to the strong coordinatewise stochastic order, a strong coupling argument for proving \eqref{eq:LossBounds} is nevertheless possible. To accomplish this, this paper will introduce a Markov coupling in an extended space which carries redundant information about flow counting processes associated with the network population. As an application we obtain a simple proof of \eqref{eq:LossBounds} in the strong stochastic sense which at the same time strengthens \eqref{eq:LossBoundsMean} and  greatly simplifies its lengthy proof given in \cite{VanDijk_VanDerWal_1989}.

\section{Network population processes}

\subsection{Markov dynamics}

Consider a network consisting of a finite set of nodes $N=\{1,\dots,n\}$ where jobs randomly move across
directed links $L \subset (N \cup \{0\})^2$, and where node $0$ represents the outside world. The state of the system
is denoted by $x = (x_1,\dots,x_n) \in S$ where $S \subset \Z_+^N$ and $\Z_+$ denotes the positive integers.
The network dynamics is modeled as a Markov jump process $X = (X_1(t), \dots, X_n(t))_{t
\ge 0}$ in state space $S$ with transitions
\[
 x \mapsto x - e_i + e_j
 \quad \text{at rate} \ \alpha_{i,j}(x), \quad (i,j) \in L,
\]
where $e_i$ denotes the $i$-th unit vector in $\Z^n$, and $e_0$ stands as a synonym for zero. Here
\begin{itemize}
  \item $X_i(t)$ is the number of jobs in node $i$ at time $t$,
  \item $\alpha_{i,j}(x)$ for $i,j \in N$ is the instantaneous transition rate of jobs from node $i$ to node $j$ at state $x=(x_1,\dots,x_n)$,
  \item $\alpha_{0,i}(x)$ and $\alpha_{i,0}(x)$ are the arrival and departure rates of jobs for node $i$ at state $x=(x_1,\dots,x_n)$.
\end{itemize}
A collection of transition rates $\alpha_{i,j}: S \to \R_+$ and an initial state $X(0)$ defines such a Markov jump process in $S$, when $\alpha_{i,j}(x) = 0$ for all $x \in S$ such that $x-e_i+e_j \not\in S$ and satisfy the standard regularity condition  which guarantees that the Markov jump process is nonexplosive (see e.g.\ \cite{Bremaud_1999}).

\subsection{Augmented state--flow process}

The \emph{state--flow process} associated to population process $X$ generated by transition rates $\alpha_{i,j}$ is a Markov jump process $(X,F)$ taking values in $S
\times \Z_+^L$ and having transitions
\[
 (x,f) \mapsto (x - e_i + e_j, \ f + e_{i,j})
 \quad \text{at rate} \ \alpha_{i,j}(x),
 \quad (i,j) \in L,
\]
where $e_{i,j}$ denotes the unit vector in $\Z_+^L$ having its $(i,j)$-coordinate equal to one and other coordinates zero.
Here
\begin{itemize}
\item $X_i(t)$ is the number of jobs at node $i$ at time $t$
\item $F_{i,j}(t)-F_{i,j}(0)$ is the number of transitions across link $(i,j)$ during $(0,t]$.
\end{itemize}
This process is \emph{redundant} in that the second component of $(X,F)$ may be recovered from $F(0)$ and the
path of $X$ by the formula
\[
 F_{i,j}(t) - F_{i,j}(0) \ = \ \# \left\{ s \in (0,t]: \ X(s) - X(s-) = - e_i + e_j \right\},
\]
where $X(s-)$ denotes the left limit of $X$ at time $s$. Adding this redundancy allows to derive
useful non-Markov couplings of $X$ in terms of Markov couplings of $(X,F)$, as we shall soon see.

\subsection{Stochastic ordering and coupling}
\subsubsection{Strong stochastic order}
Let us recall some standard notations and facts about strong stochastic ordering of random processes.
For random vectors $A$ and $B$ in $\R^n$, we denote $A \lest B$ and say that $A$ is less than $B$ in the strong stochastic order if $\E \phi(A) \le \E \phi(B)$ for all $\phi: \R^n \to \R$ which are increasing with respect to the coordinatewise order on $\R^n$ and for which the expectations are defined. For real-valued random processes $(A_t)$ and $(B_t)$ indexed by a time parameter $t$ we denote $(A_t) \lest (B_t)$ if $(A_{t_1}, \dots, A_{t_n}) \lest (B_{t_1}, \dots, B_{t_n})$ for all finite collections of time parameters $(t_1,\dots,t_n)$.

Strong stochastic order allows to compare means in the sense that 
$A_t \lest B_t$ implies $\E A_t \le \E B_t$ whenever $A_t$ and $B_t$ are positive or have finite means.
Perhaps more importantly, it also allows to compare upper tail events in that $A_t \lest B_t$ always implies $\pr(A_t > s) \le \pr(B_t > s)$ for all real numbers $s$. In fact the latter property is equivalent to $A_t \lest B_t$, see e.g.\ \cite{Muller_Stoyan_2002,Shaked_Shanthikumar_2007}.

\subsubsection{Coupling}
For random vectors $A$ and $B$ we denote $A \eqst B$ if $A$ and $B$ have the same distribution. This definition is extended to random processes by denoting $(A_t) \eqst (B_t)$ if $(A_{t_1}, \dots, A_{t_n}) \eqst (B_{t_1}, \dots, B_{t_n})$ for all finite collections of time parameters $(t_1,\dots,t_n)$.

A bivariate random process $(\hat A_t, \hat B_t)$ indexed by time parameter $t$ is a \emph{coupling} of random processes $(A_t)$ and $(B_t)$ if $(\hat A_t) \eqst (A_t)$ and $(\hat B_t) \eqst (B_t)$. A simple computation using the definitions shows that if $(A_t)$ and $(B_t)$ admit a coupling which is ordered in the sense that $\hat A_t \le \hat B_t$ for all $t$ almost surely, then $(A_t) \lest (B_t)$. As a consequence of Strassen's coupling theorem, the converse implication is also true whenever the paths of $(A_t)$ and $(B_t)$ are right-continuous with left limits (e.g.\ \cite{Kamae_Krengel_OBrien_1977} or \cite[Thm 4.6]{Leskela_2010}).

\subsection{Marching soldiers coupling}
\label{sec:MarchingSoldiers}

\subsubsection{Coupling of population processes}
\label{sec:MarchingPopulations}
Fix a network with nodes $N=\{1,\dots,n\}$ and directed links $L \subset (N \cup \{0\})^2$, and consider
two population processes $X$ in $S \subset \Z_+^N$ and $X'$ in $S' \subset \Z_+^N$, generated by state-dependent transition rates $\alpha_{i,j}(x)$ and $\alpha'_{i,j}(x)$, respectively. In most applications the state spaces $S$ and $S'$ are assumed to be identical, but this restriction is not needed for the results developed in this paper.

A natural and simple way to couple two Markov population processes of the above type is to force both processes to locally take identical steps with as high rate as possible. This so-called \emph{marching soldiers coupling}~\cite{Chen_2005} of the Markov jump processes $X$ and $X'$ is defined as a Markov jump process $(\hat X, \hat X')$ in $S \times S'$ having the transitions
\begin{align*}
 &(x,x') \mapsto \\
 & \quad \left\{
 \begin{aligned}
  (x - e_i + e_j, \, x'-e_i+e_j)  & \quad \text{at rate} \ {\alpha_{i,j}}(x) \wedge {\alpha'_{i,j}}(x'), \\
  (x, \, x'-e_i+e_j)         & \quad \text{at rate} \ ({\alpha'_{i,j}}(x')   - {\alpha_{i,j}}(x))_+, \\
  (x - e_i + e_j, \, x')         & \quad \text{at rate} \ ({\alpha_{i,j}}(x)     - {\alpha'_{i,j}}(x'))_+,
 \end{aligned}
 \right.
\end{align*}
for $(i,j) \in L$. Here we use the shorthands $a \wedge a' = \min\{a,a'\}$ and $a_+ = \max\{a,0\}$.
By inspecting the marginal transition rates for each state $x \in S$ and $x' \in S'$, one can check that the process $\hat X$ (resp.\ $\hat X'$) is a Markov jump process by itself and has the same transition rates as $X$ (resp.\ $X'$). That is, $(\hat X, \hat X')$ is a Markov coupling of $X$ and $X'$.

\subsubsection{Coupling of state--flow processes}
\label{sec:MarchingFloex}

The marching soldiers coupling of state--flow processes $(X,F)$ and $(X',F')$ associated to population processes $X$ and $X'$ is defined analogously as a Markov jump process $(\hat X, \hat F, \hat X', \hat F')$ in the state space $(S \times \Z_+^L) \times (S' \times \Z_+^L)$ having the transitions
\begin{align*}
 &((x,f),(x',f')) \mapsto \\
 & \quad \left\{
 \begin{aligned}
  (T_{i,j}(x,f), T_{i,j}(x',f'))  & \quad \text{at rate} \ {\alpha_{i,j}}(x) \wedge {\alpha'_{i,j}}(x'), \\
  ((x,f), T_{i,j}(x',f'))         & \quad \text{at rate} \ ({\alpha'_{i,j}}(x')   - {\alpha_{i,j}}(x))_+, \\
  (T_{i,j}(x,f), (x',f'))         & \quad \text{at rate} \ ({\alpha_{i,j}}(x)     - {\alpha'_{i,j}}(x'))_+, \\
 \end{aligned}
 \right.
\end{align*}
for $(i,j) \in L$, where $T_{i,j}(x,f) = (x-e_i+e_j, f + e_{i,j})$ denotes the extended state obtained from state $(x,f)$ after moving one job from node~$i$ to node~$j$.

\subsection{Flow balance}

If $(X,F)$ is the state--flow process associated to a population process $X$, then
\begin{multline*}
 X_i(t) - X_i(0) \ =  \ \!\!\! \sum_{j: (j,i) \in L} \!\!\! (F_{j,i}(t)-F_{j,i}(0)) \ \\
 - \!\!\! \sum_{j: (i,j) \in L} \!\!\! (F_{i,j}(t)-F_{i,j}(0))
\end{multline*}
for all $t \ge 0$. This flow conservation equality shows that the quantity
\[
 X_i(t) - \!\!\! \sum_{j: (j,i) \in L} \!\!\! F_{j,i}(t) \
 +\!\!\! \sum_{j: (i,j) \in L} \!\!\! F_{i,j}(t)
\]
remains constant over time for all nodes $i$. As a consequence, any coupling
$(\hat X, \hat F, \hat X', \hat F')$ of state--flow processes $(X,F)$ and $(X',F')$ automatically preserves the relation
\begin{equation}
 \label{eq:NodeBalance}
 x_i - \!\!\! \sum_{j: (j,i) \in L} \!\!\! f_{j,i} \ + \!\!\! \sum_{j: (i,j) \in L} \!\!\! f_{i,j}
 \ \ = \ \
 x'_i - \!\!\! \sum_{j: (j,i) \in L} \!\!\! f'_{j,i} \ + \!\!\! \sum_{j: (i,j) \in L} \!\!\! f'_{i,j},
\end{equation}
for all nodes $i$, in the sense that the set of pairs $(x,f)$ and $(x',f')$ related according to~\eqref{eq:NodeBalance}
is absorbing for $(\hat X, \hat F, \hat X', \hat F')$.

\section{Open linear networks}

\subsection{Linear network dynamics}

Consider an open linear network of $n$ nodes represented by a directed graph $(N \cup \{0\},L)$ with node set $N=\{1,\dots,n\}$ and
link set $L = \{(0,1),(1,2),\dots,(n-1,n),(n,0)\}$, see Figure~\ref{fig:OpenNetwork}.

\begin{figure}[h]
\begin{center}
  \psset{unit=0.58cm}
  \begin{pspicture}(0,-0.5)(8,1.5)
  \psset{fillstyle=none,linecolor=white}
  \Cnode(-3.5,0){N0}
  \Cnode(11.5,0){N4}

  \psset{fillstyle=solid,fillcolor=LightBlue,linecolor=black}
  \cnodeput(0,0){N1}{1}
  \cnodeput(4,0){N2}{2}
  \cnodeput(8,0){N3}{3}

  \psset{linecolor=black,nodesep=3pt,arrows=->,arrowsize=5pt}
  \ncline{N0}{N1}\naput{$\lambda(x)$}
  \ncline{N1}{N2}\naput{$\mu_1(x)$}
  \ncline{N2}{N3}\naput{$\mu_2(x)$}
  \ncline{N3}{N4}\naput{$\mu_3(x)$}
  \end{pspicture}
\end{center}
\caption{\label{fig:OpenNetwork} Open linear network with $n=3$ nodes.}
\end{figure}
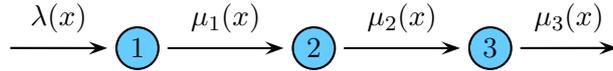

The system state is a vector $x=(x_1,\dots,x_n)$ with values in a state space $S \subset \Z_+^N$.
We model the network dynamics by a Markov jump process $X(t) \in S$ with transitions
\begin{equation}
 \label{eq:NOriginalN}
 x \mapsto \left\{
 \begin{aligned}
  x + e_1           & \quad \text{at rate} \ \lambda(x), \\
  x - e_i + e_{i+1} & \quad \text{at rate} \ \mu_i(x), \quad i=1,...,n-1, \\
  x - e_n           & \quad \text{at rate} \ \mu_n(x).
 \end{aligned}
 \right.
\end{equation}
In queueing terminology, $\lambda(x)$ is the arrival rate of jobs to node~1, and $\mu_i(x)$ can be thought of as the aggregate service rate at node~$i$.
The fact that $\lambda(x)$ and $\mu_i(x)$ may depend on the full network state $x=(x_1,\dots,x_n)$ allows to model queueing systems with admission control in front, and blocking and interference effects among the servers.
We assume that $\lambda(x) = 0$ for all $x \in S$ such that $x+e_1 \notin S$ and $\mu_i(x) = 0$ for all $x \in S$ such that $x-e_i+e_{i+1} \notin S$.

The above model is a special instance of a general population process defined in the previous section, where $\alpha_{0,1}(x) = \lambda(x)$ and $\alpha_{i,i+1}(x) = \mu_i(x)$ for $i=1,\dots,n$.

\subsection{Strong ordering of flows}

Consider now two population processes $X$ and $X'$ on the open linear network of $n$ nodes generated by
state-dependent transition rates $(\lambda,\mu_1,\dots,\mu_n)$ and $(\lambda',\mu_1',\dots,\mu_n')$, respectively. Here the state spaces $S$ of $X$ and $S'$ of $X'$ are subsets of $\Z_+^N$ which
in most applications are identical, but for the following result this restriction is not needed.

The following is the main result of this paper. It gives a sufficient condition for the strong stochastic ordering of flow counting processes associated to $X$ and $X'$. For convenience, we define $i+1 := 0$ for $i=n$ below.

\begin{theorem}
\label{the:LinearNetwork}
Assume that the following implications hold for all $x \in S$ and $x' \in S$ and for all $i=1,\dots,n-1$:
\begin{align}
 \label{eq:NRateToFirst}
 x_1 \ge x_1'                                     &\implies \lambda(x) \le \lambda'(x'), \\
 \label{eq:NRateFromI}
 x_i \le x_i' \ \text{and} \ x_{i+1} \ge x_{i+1}' &\implies \mu_i(x) \le \mu_i'(x'), \\
 \label{eq:NRateFromLast}
 x_n \le x_n'                                     &\implies \mu_n(x) \le \mu_n'(x').
\end{align}
Then the associated flow counting processes initiated at zero are ordered according to
\begin{equation}
 \label{eq:LinearOrderSt}
 ({F_{i,i+1}}(t))_{t \ge 0} \ \lest \ ({F'_{i,i+1}}(t))_{t \ge 0}
\end{equation}
for all $i = 0,1,\dots,n$, whenever ${X}(0) \eqst {X'}(0)$.
\end{theorem}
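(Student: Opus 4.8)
The plan is to build the marching soldiers coupling $(\hat X, \hat F, \hat X', \hat F')$ of the state--flow processes and show that the hypotheses \eqref{eq:NRateToFirst}--\eqref{eq:NRateFromLast} force, inductively over the transitions, that $\hat F_{i,i+1}(t) \le \hat F'_{i,i+1}(t)$ for every $i$ and every $t$. Since this coupling has the correct marginals, the almost-sure coordinatewise ordering of the flow vectors $(\hat F_{0,1}, \dots, \hat F_{n,0})$ together with the remark that coordinatewise-ordered couplings yield $\lest$ for the processes immediately gives \eqref{eq:LinearOrderSt}. First I would choose the initial condition: by assumption $X(0) \eqst X'(0)$, so I may start the coupling from $\hat X(0) = \hat X'(0)$ a.s.\ (a synchronous initial coupling), and set $\hat F(0) = \hat F'(0) = 0$; then the flow-balance relation \eqref{eq:NodeBalance} holds at time $0$ and, being preserved by any coupling of state--flow processes, holds for all $t$.

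The heart of the argument is a single invariant that is stable under all coupled transitions. Because the network is linear, \eqref{eq:NodeBalance} at node $i$ reads $\hat X_i + \hat F_{i,i+1} - \hat F_{i-1,i} = \hat X_i' + \hat F_{i,i+1}' - \hat F_{i-1,i}'$ (with the convention $F_{0,1}=F_{\rm in}$ etc.). I would carry the conjunction of the relations
\[
 \hat F_{i,i+1} \le \hat F_{i,i+1}' \quad\text{and}\quad \hat X_{i} \ge \hat X_{i}' \ \text{whenever}\ \hat F_{i-1,i} = \hat F_{i-1,i}',
\]
more precisely the ``staircase'' relation that, reading the balance equations, says: for each $i$, either the flow into node $i$ is strictly behind in the primed system, or the queue content at node $i$ is at least as large in the unprimed system. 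The key point is that a coupled transition across link $(i,i+1)$ can only decrease $\hat F_{i,i+1}' - \hat F_{i,i+1}$ when the primed system moves alone, i.e.\ at rate $(\mu_i'(\hat X') - \mu_i(\hat X))_+$ when the unprimed does not; but the hypothesis \eqref{eq:NRateFromI} says precisely that when $\hat X_i \le \hat X_i'$ and $\hat X_{i+1} \ge \hat X_{i+1}'$ we have $\mu_i(\hat X) \le \mu_i'(\hat X')$, so the dangerous ``unprimed alone'' transition has rate zero exactly in the configurations where it would break the ordering. One then checks case by case (links $(0,1)$, $(i,i+1)$ for $1 \le i \le n-1$, and $(n,0)$, each with its three coupled sub-transitions) that the invariant is preserved; \eqref{eq:NRateToFirst} handles the input link and \eqref{eq:NRateFromLast} the output link in the same fashion.

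The main obstacle I anticipate is formulating the right invariant: the naive guess $\hat F_{i,i+1} \le \hat F_{i,i+1}'$ for all $i$ is \emph{not} by itself preserved, because a service completion at node $i$ in the unprimed system (moving a job from $i$ to $i+1$) increments $\hat F_{i,i+1}$ and can momentarily overshoot unless we know something about the queue lengths — and queue lengths are governed, via \eqref{eq:NodeBalance}, by the \emph{neighboring} flows. So the invariant must couple the flow inequality at link $i$ with a queue inequality at node $i$, which in turn is controlled by the flow inequality at link $i-1$; this is why the hypotheses are chained through consecutive coordinates. Once the correct conjunctive invariant is written down, verifying its stability is a finite check over the link types and the three sub-transitions each, using nonnegativity of rates and the three implications; I would organize this as a short lemma (``the set of extended states satisfying the invariant is absorbing for the marching soldiers coupling'') and then conclude. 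A minor technical point to mention is nonexplosiveness of the coupled chain, which follows from the standing regularity assumption on each $\alpha_{i,j}$ since the coupled rates are dominated by sums of the marginal rates.
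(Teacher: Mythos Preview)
Your approach is essentially the paper's: marching soldiers coupling of the state--flow processes, synchronous initial state, and a case analysis over links showing that the set of ordered flow pairs is absorbing. The one point worth correcting is your anticipated ``main obstacle'': the naive invariant
\[
  f_{i,i+1} \le f'_{i,i+1} \quad \text{for all } i=0,\dots,n,
\]
together with the node-balance equalities~\eqref{eq:NodeBalance} (which are preserved by \emph{any} coupling of state--flow processes and hence come for free), \emph{is} already absorbing for the marching soldiers coupling. You do not need to augment it with conditional queue-length relations. The queue inequalities you want---$x_i \le x_i'$ and $x_{i+1} \ge x_{i+1}'$---are not part of the invariant but are \emph{consequences} of it at the critical instant: when $f_{i,i+1} = f'_{i,i+1}$, node balance at $i$ combined with $f_{i-1,i} \le f'_{i-1,i}$ gives $x_i' - x_i = f'_{i-1,i} - f_{i-1,i} \ge 0$, and node balance at $i+1$ combined with $f_{i+1,i+2} \le f'_{i+1,i+2}$ gives $x_{i+1} - x'_{i+1} \ge 0$. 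Then \eqref{eq:NRateFromI} kills the dangerous transition. Your ``staircase'' relation is thus logically equivalent to the simpler conjunction, but phrasing the queue relations as derived rather than carried makes the argument cleaner and the case check immediate.

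A small slip: the transition that threatens $f_{i,i+1} \le f'_{i,i+1}$ is the one where the \emph{unprimed} system moves alone across $(i,i+1)$, at rate $(\mu_i(x) - \mu'_i(x'))_+$; you wrote ``primed'' and the wrong rate before catching yourself in the next clause.
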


\begin{proof}
It is sufficient to construct a coupling of the state--flow processes $(X,F)$ and $(X',F')$ for which~\eqref{eq:LinearOrderSt} holds for all $i$ and all $t \ge 0$ with probability one.
Let
\[
 (\hat X, \hat F, \hat X', \hat F')
\]
be a marching soldiers coupling of $(X,F)$ and $(X',F')$ started at the (possibly random) initial state
\[
 (X(0),F(0),X(0),F(0)),
\]
as defined in Sec.~\ref{sec:MarchingSoldiers}.
By assumption, $F(0)=F'(0)$ is the zero vector in $\Z_+^L$. Note that the above vector couples the initial states of $(X,F)$ and $(X',F')$
because $X(0)$ and $X'(0)$ have the same distribution. The latter assumption also implies that $X(0)$ takes its values in $S \cap S'$ almost surely.

We define a relation between state--flow pairs $(x,f) \in S \times \Z_+^L$ and $(x',f') \in S' \times \Z_+^L$ by denoting $(x,f) \sim (x',f')$ if
\begin{equation}
 \label{eq:FlowOrder}
 f_{i,i+1} \le f'_{i,i+1}
\end{equation}
for all $i=0,1,\dots,n$ and if \eqref{eq:NodeBalance} holds for all $i=1,\dots,n$. We note that $(\hat X(0), \hat F(0)) \sim (\hat X'(0), \hat F'(0))$ almost surely. To finish the proof it suffices to show that the set of state--flow pairs that are related according to $\sim$ is an absorbing set for the marching soldiers coupling. Note that both sides of \eqref{eq:NodeBalance} are invariant to any possible transition of the processes. Hence we only need to show that none of the inequalities \eqref{eq:FlowOrder} can ever be broken by any transition of the coupled process.

Let us first show that \eqref{eq:FlowOrder} cannot be broken for $i=0$. Consider a state--flow pair related according to $(x,f) \sim (x',f')$. If $f_{0,1} < f'_{0,1}$, then a single transition cannot break the inequality $f_{0,1} \le f'_{0,1}$. Consider next the case where $f_{0,1} = f'_{0,1}$. Then the flow conservation equality \eqref{eq:NodeBalance} at node $1$ implies that
\[
 x_1 - x_1' = f_{1,2}' - f_{1,2} \ge 0.
\]
Thus $x_1 \ge x'_1$ which in light of \eqref{eq:NRateToFirst} implies that $\lambda(x) \le \lambda'(x')$. This shows that the marching soldiers coupling has zero transition rate for the transition $((x,f),(x',f')) \mapsto ((x+e_1,f+e_{0,1}),(x',f'))$. But this is the only transition which potentially could break \eqref{eq:FlowOrder} for $i=0$.

Let us next show that \eqref{eq:FlowOrder} cannot be broken for $1 \le i \le n-1$. Consider a state--flow pair related according to $(x,f) \sim (x',f')$. Again, we only need to study the case where $f_{i,i+1} = f'_{i,i+1}$. Then the flow conservation equality \eqref{eq:NodeBalance} at node $i$ implies that
\[
 x_i' - x_i = f_{i-1,i}' - f_{i-1,i} \ge 0,
\]
whereas \eqref{eq:NodeBalance} for node $i+1$ implies that
\[
 x_{i+1}' - x_{i+1} = f_{i+1,i+2} - f_{i+1,i+2}' \le 0.
\]
Thus $x_i \le x'_i$ and $x_{i+1} \ge x_{i+1}'$ which in light of \eqref{eq:NRateFromI} imply that $\mu_i(x) \le \mu_i'(x')$. This shows that the marching soldiers coupling has zero rate for the transition $((x,f),(x',f')) \mapsto ((x-e_i+e_{i+1},f+e_{i,i+1}),(x',f'))$. But this is the only transition which potentially could break \eqref{eq:FlowOrder} for $i$.

Let us finally show that \eqref{eq:FlowOrder} cannot be broken for $i=n$. Consider a state--flow pair related according to $(x,f) \sim (x',f')$. Again, we only need to consider next that case where $f_{n,0} = f'_{n,0}$. Then the flow conservation equality \eqref{eq:NodeBalance} at node $n$ implies that
\[
 x_n' - x_n = f_{n-1,n}' - f_{n-1,n} \ge 0.
\]
Thus $x_n \le x'_n$ which in light of \eqref{eq:NRateFromLast} implies that $\mu_n(x) \le \mu_n'(x')$. This shows that the marching soldiers coupling has zero transition rate for the transition $((x,f),(x',f')) \mapsto ((x+e_n,f+e_{n,0}),(x',f'))$. But this is the only transition which potentially could break  \eqref{eq:FlowOrder} for $i$.

Because the marching soldiers coupling may never exit the set of ordered state--flow pairs, we conclude that $(\hat X(t), \hat F(t)) \sim (\hat X'(t), \hat F'(t))$ for all $t \ge 0$, and especially $\hat F_{i,i+1}(t) \le \hat F'_{i,i+1}(t)$ for all $i$ and all $t \ge 0$ almost surely.
\end{proof}

\subsection{Strong ordering of populations}

To understand how Theorem~\ref{the:LinearNetwork} is structurally different from more well-known ordering and coupling results for Markov population processes, let $X$ and $X'$ as in the previous section. The following result gives a sufficient condition for the strong stochastic ordering of the population processes $X$ and $X'$ with respect to the coordinatewise order on $\R^n$. For vectors in $\R^n$ we write $(x_1,\dots,x_n) \le (x'_1,\dots,x'_n)$ if $x_i \le x'_i$ for all $i$. For convenience, we define $i+1 := 0$ for $i=n$ below.

\begin{theorem}
\label{the:LinearPopulation}
Assume that the following implications hold for all $x \in S$ and $x' \in S'$ such that $x \le x'$ and for all $i=2,\dots,n$:
\begin{align}
 \label{eq:PopRateToFirst}
 x_1 = x_1'                                     &\implies \lambda(x) \le \lambda'(x') \ \text{and} \ \mu_1(x) \ge \mu'_1(x'), \\
 \label{eq:PopRateFromI}
 x_i = x_i'                     &\implies  \mu_{i-1}(x) \le \mu'_{i-1}(x') \ \text{and} \ \mu_i(x) \ge \mu'_i(x').
\end{align}
Then $(X(t)) \lest (X'(t))$ whenever $X(0) \lest X'(0)$.
\end{theorem}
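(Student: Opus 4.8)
The plan is to mimic the flow-coupling proof of Theorem~\ref{the:LinearNetwork}, but now using an ordinary marching soldiers coupling of the \emph{population} processes $X$ and $X'$ (no flow augmentation needed), and to show that the coordinatewise order $x \le x'$ is preserved. Concretely, I would let $(\hat X, \hat X')$ be the marching soldiers coupling of the linear-network population processes, started from a coupling of $X(0)$ and $X'(0)$ that realizes $X(0) \lest X'(0)$ as $\hat X(0) \le \hat X'(0)$ almost surely (such a coupling exists by Strassen's theorem, cited earlier). It then suffices to prove that the set $\{(x,x') : x \le x'\}$ is absorbing for $(\hat X, \hat X')$; combined with the ordered initial condition and the fact that $\hat X \eqst X$, $\hat X' \eqst X'$ under the coupling, this yields $(X(t)) \lest (X'(t))$.

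The heart of the argument is the absorbing-set check, which I would do coordinate by coordinate, exactly as in the previous proof. Suppose $x \le x'$ and consider which coupled transitions could break $x_k \le x'_k$ for some fixed $k$. A transition increases the gap $x'_k - x_k$ only if it moves $\hat X$ up in coordinate $k$ without moving $\hat X'$, or moves $\hat X'$ down in coordinate $k$ without moving $\hat X$. In the linear network the events affecting coordinate $k$ are: an external arrival (affects $x_1$, rate $\lambda$), a service completion at node $k-1$ moving a job into $k$ (rate $\mu_{k-1}$), and a service completion at node $k$ moving a job out (rate $\mu_k$, where $\mu_n$ is the departure to the outside). The danger cases are therefore: the gap at coordinate $k$ is tight, i.e.\ $x_k = x'_k$, and either $\hat X$ wants to do an arrival/inflow step at node $k$ that $\hat X'$ does not, or $\hat X'$ wants to do a service step at node $k$ that $\hat X$ does not. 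By the definition of the marching soldiers coupling these "solo" transitions occur at rates $(\lambda(x) - \lambda'(x'))_+$, $(\mu_{k-1}(x) - \mu'_{k-1}(x'))_+$, and $(\mu'_k(x') - \mu_k(x))_+$ respectively, so it is enough to show each of these is zero whenever $x \le x'$ and $x_k = x'_k$. For $k=1$ this is precisely the first clause of \eqref{eq:PopRateToFirst} (giving $\lambda(x) \le \lambda'(x')$) together with, for the service-out step at node~$1$, the second clause $\mu_1(x) \ge \mu'_1(x')$; for $2 \le k \le n$ it is \eqref{eq:PopRateFromI}, whose two clauses $\mu_{k-1}(x) \le \mu'_{k-1}(x')$ and $\mu_k(x) \ge \mu'_k(x')$ kill exactly the inflow-at-$k$ solo step and the outflow-at-$k$ solo step; and $\mu_n(x) \ge \mu'_n(x')$ (the $i=n$ instance) handles the departure step that could lower $\hat X'_n$. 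One also checks the book-keeping point that a single coupled transition never changes $x'_k - x_k$ by more than one and never makes it negative starting from a tight coordinate, so no inequality can be "jumped" over.

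One subtlety I would flag and handle carefully is that the relevant clauses in \eqref{eq:PopRateToFirst}--\eqref{eq:PopRateFromI} are conditioned on a single coordinate being tight ($x_k = x'_k$) while the remaining coordinates only satisfy $x \le x'$; this is exactly the information available at the moment a transition threatens to break $x_k \le x'_k$, so the hypotheses are tailored to the argument and no stronger monotonicity is needed. A second, more technical point is that the marching soldiers coupling must remain a well-defined non-explosive Markov jump process on $S \times S'$ and must restrict correctly at the boundaries of $S$ and $S'$ (where some rates vanish by assumption); since the minimum and the positive-part rates are all dominated by the original rates, non-explosiveness is inherited, and the boundary conventions $\lambda(x)=0$ when $x+e_1\notin S$, etc., are consistent with the coupling. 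The main obstacle, such as it is, is purely organizational: enumerating the transitions touching coordinate $k$ and matching each dangerous solo transition to the correct clause of the hypothesis, taking care with the endpoints $k=1$ and $k=n$ and with the convention $i+1:=0$. There is no conceptual difficulty beyond that, in contrast to Theorem~\ref{the:LinearNetwork} where the flow augmentation was essential; here the ordinary coupling suffices precisely because the hypotheses \eqref{eq:PopRateToFirst}--\eqref{eq:PopRateFromI} build in the compensating monotonicity of the service rates that the flow-ordering theorem did not require.
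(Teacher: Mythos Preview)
Your proposal is correct and follows essentially the same route as the paper: construct the marching soldiers coupling of $X$ and $X'$, use Strassen to couple the initial states in order, and verify coordinate by coordinate that the set $\{(x,x')\in S\times S': x\le x'\}$ is absorbing by checking that each ``solo'' transition threatening a tight coordinate has zero rate under the hypotheses \eqref{eq:PopRateToFirst}--\eqref{eq:PopRateFromI}. One small slip: where you wrote ``increases the gap $x'_k-x_k$'' you mean \emph{decreases} it (the transitions you list are exactly those that push $x'_k-x_k$ down), but your identification of the dangerous transitions and the matching to the hypotheses is correct.
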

\begin{proof}
Let $(\hat X(0), \hat X'(0))$ be a coupling of $X(0)$ and $X'(0)$ such that $\hat X(0) \le \hat X'(0)$ with probability one. Such a coupling exists by Strassen's coupling theorem (e.g.\ \cite{Muller_Stoyan_2002,Shaked_Shanthikumar_2007}).

Let $(\hat X, \hat X')$ be a marching soldiers coupling of $X$ and $X'$ as described in Sec.~\ref{sec:MarchingPopulations}, started at the initial state $(\hat X(0), \hat X'(0))$. We will show that the marching soldiers coupling never exits the set of state pairs ordered according to the coordinatewise order, that is, the set $\{(x,x') \in S \times S': x \le x'\}$ is absorbing for the Markov process $(\hat X, \hat X')$.

Consider a pair of states such that $x \le x'$, and let us try to break the ordering $x_1 \le x'_1$. This is possible in a single transition only if $x_1 = x_1'$, in which case \eqref{eq:PopRateToFirst} implies that $\lambda(x) \le \lambda'(x')$ and  $\mu_1(x) \ge \mu'_1(x')$. But then the transitions $(x,x') \mapsto (x+e_1,x')$ and $(x,x') \mapsto (x,x'-e_1)$ both have zero rate for the marching soldiers coupling. These are the only transitions for the marching coupling which could break the relation $x_1 \le x_1'$.

Consider next a pair of states such that $x \le x'$, and let us try to break the ordering $x_i \le x'_i$ for some $i \ge 2$. This is possible in a single transition only if $x_i = x_i'$, in which case \eqref{eq:PopRateFromI} implies that $\mu_{i-1}(x) \le \mu'_{i-1}(x')$ and $ \mu_i(x) \ge \mu'_i(x')$. But then the transitions $(x,x') \mapsto (x-e_{i-1}+e_i,x')$ and $(x,x') \mapsto (x,x'-e_i+e_{i+1})$ both have zero rate for the marching soldiers coupling. These are the only transitions for the marching coupling which could break the relation $x_i \le x_i'$.

We conclude that $\hat X(t) \le \hat X'(t)$ for all $t \ge 0$ almost surely, and therefore the claim follows.
\end{proof}

Note that Theorem~\ref{the:LinearPopulation} can also be proved as a consequence of a generic relation preservation result in~\cite[Example 5.7]{Leskela_2010} (see alternatively \cite{Lopez_Sanz_2002}), or by applying the transition rate conditions in \cite{Massey_1987,Whitt_1986}.

\section{Application: Throughput ordering in a tandem queue}

Let us now revisit the tandem queueing system of Sec.~\ref{sec:TandemQueue}. The balanced model described by \eqref{eq:UpperModel} corresponds to a population process $X$ on a 2-node linear network where $S = \Salt$ and
\begin{align*}
 \lambda(x_1,x_2) &= \beta 1(x_1<s_1,x_2<s_2),\\
 \mu_1(x_1,x_2) &= \delta_1(x_1) 1(x_2 < s_2),\\
 \mu_2(x_1,x_2) &= \delta_2(x_2) 1(x_1 < s_1).
\end{align*}
The original model described by \eqref{eq:Original} corresponds to a similar population process $X'$ where $S' = \Sorig$ and
\begin{align*}
 \lambda'(x_1,x_2) &= \beta 1(x_1<s_1),\\
 \mu'_1(x_1,x_2) &= \delta_1(x_1) 1(x_2 < s_2),\\
 \mu'_2(x_1,x_2) &= \delta_2(x_2).
\end{align*}
In this case Theorem~\ref{the:LinearPopulation} cannot be applied to order populations according to $X(t) \lest X'(t)$ because condition~\eqref{eq:PopRateFromI} fails for $i=2$ due to $\mu_2(x) < \mu'_2(x')$ when $x_1 = x_1' = s_1$ and $0<x_2=x_2' < s_2$. Neither can Theorem~\ref{the:LinearPopulation} cannot be applied to order populations according to $X'(t) \lest X(t)$ because condition~\eqref{eq:PopRateToFirst} fails due to $\lambda'(x') > \lambda(x)$ when $x'_1 = x_1 < s_1$ and $x'_2 < x_2 = s_2$.

Nevertheless, the augmented state--flow process can be coupled with the help of Theorem~\ref{the:LinearNetwork}.
Indeed, the conditions of Theorem~\ref{the:LinearNetwork} are valid if and only if the service rates $\delta_1(x_1)$ and $\delta_2(x_2)$ are increasing. When this natural monotonicity condition is satisfied and when both systems are started at the same initial state and zero flow counters, Theorem~\ref{the:LinearNetwork} implies that all flow counting processes are ordered by
\begin{align*}
 F_{0,1}(t) &\lest F'_{0,1}(t),\\
 F_{1,2}(t) &\lest F'_{1,2}(t),\\
 F_{2,0}(t) &\lest F'_{2,0}(t).
\end{align*}
Because $F_{0,1}(t)$ and $F'_{0,1}(t)$ are the counting processes of accepted jobs in the balanced system and the original system, we obtain \eqref{eq:LossBounds}.

\section{Conclusions}

This paper discussed the strong stochastic ordering and coupling of network populations and their flow counting processes. Easily verifiable sufficient conditions were given for the transition rates of population processes on open linear networks which imply that the associated flow counting processes can be ordered using a natural coupling in the augmented space of state--flow processes. Important open problems include (i) to study into what extent the given sufficient conditions are also necessary and (ii) to extend the analysis into networks with two-way flows and more general network topologies. These problems are subjects of ongoing research.

\subsubsection*{Acknowledgements}
This work has been financially supported by the Emil Aaltonen Foundation and the Academy of Finland.

\bibliographystyle{abbrv}
\bibliography{lslReferences}
\end{document}